\newtheorem{theorem}{Theorem}[section]
\newtheorem{lemma}[theorem]{Lemma}
\theoremstyle{definition}
\theoremstyle{remark}
\begin{document}

\title{Ribbon concordance of knots is a partial ordering}

\author[Ian Agol]{%
        Ian Agol} 
\address{%
    University of California, Berkeley \\
    970 Evans Hall \#3840 \\
    Berkeley, CA 94720-3840} 
\email{%
     ianagol@math.berkeley.edu}

\thanks{Ian Agol is supported by a Simons Investigator grant}
\subjclass[2010]{57M}

\date{%
January 8, 2022}

\begin{abstract}
In this note we show that ribbon concordance forms a partial ordering on the set of knots, answering a question of Gordon \cite[Conjecture 1.1]{Gordon81}. The proof makes use of representation varieties of the knot groups to $SO(N)$ and relations between them induced by a ribbon concordance. 
 
\end{abstract}

\maketitle


\section{Introduction}

\begin{center}
\begin{figure}[htb]\includegraphics[height = 1.5in]{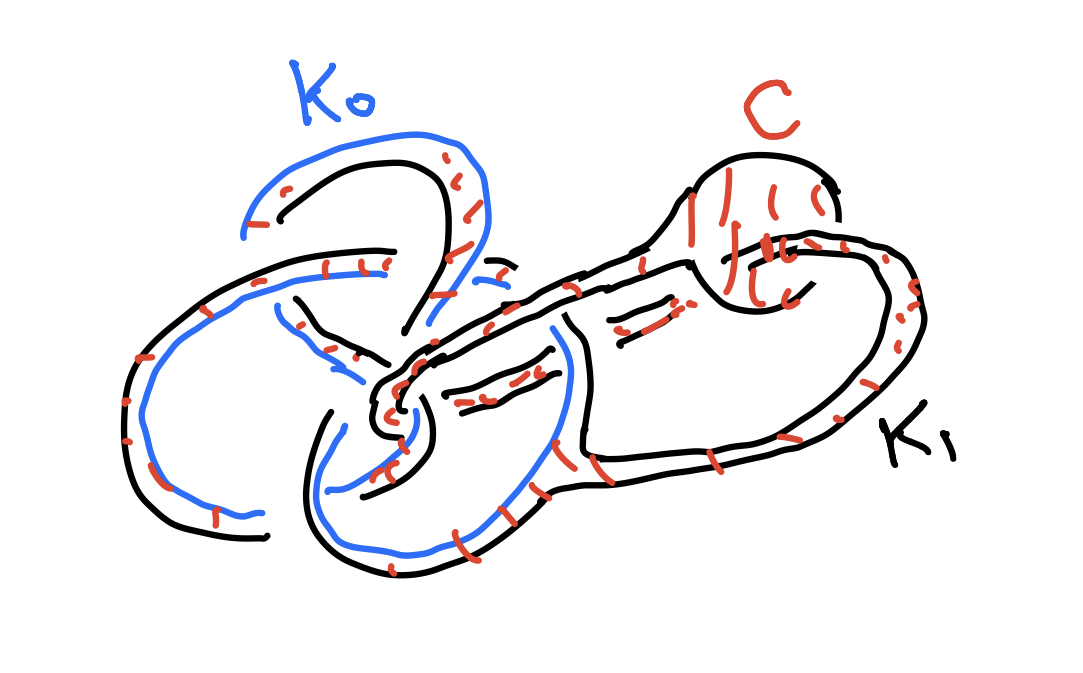}
\caption{A ribbon concordance from a knot to the figure eight knot} \label{ribbon}

\end{figure}
\end{center}

A concordance between knots $K_0, K_1 \subset S^3$ is a smooth embedded annulus $e:(S^1\times[0,1], S^1\times\{0\},S^1\times\{1\}) \to (S^3\times [0,1], K_0\times\{0\}, K_1\times\{1\})$. We also call the image of the annulus $e(S^1\times [0,1])= C\subset S^3\times[0,1]$ a concordance from $K_1$ to $K_0$. If the projection $S^3\times [0,1] \to [0,1]$ is a Morse function when restricted to $C$ with only critical points of index $0$ and $1$ (so no local maxima), then we say that $C$ is a {\it ribbon concordance} from $K_1$ to $K_0$ (introduced in \cite{Gordon81}). We write $K_1\geq K_0$ if there is a ribbon concordance from $K_1$ to $K_0$. Projecting into $S^3$, one may see $K_1$ and $K_0$ bounding an immersed annulus $C$ with ribbon singularities intersecting only $K_1$ (see Figure \ref{ribbon} for an example).

The main Conjecture 1.1 of \cite{Gordon81} states that this relation is a partial order. The ribbon concordance relation is reflexive and transitive, so the conjecture amounts to asking if it is antisymmetric? That is, if $K_1\geq K_0$ and $K_0\geq K_1$, is $K_0$ isotopic to $K_1$? Gordon answers this conjecture for knots satisfying various hypotheses, as a special case if $K_0$ or $K_1$ is fibered. Much more evidence has been amassed for this conjecture: if $K_0\geq K_1 \geq K_0$, then $K_0$ and $K_1$ have the same S-equivalence class \cite{Gilmer84}, Seifert genus and knot Floer homology \cite{Zemke19}, Khovanov homology \cite{LevineZemke19}, and instanton knot Floer homology \cite[Theorem 7.4]{KM21}. 
 
The main result of this note is to answer Gordon's conjecture positively: 
\begin{theorem}\label{order}
Ribbon concordance is a partial order.
\end{theorem}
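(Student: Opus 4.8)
Since ribbon concordance is visibly reflexive and transitive, everything comes down to antisymmetry: given a ribbon concordance $C$ from $K_1$ to $K_0$ and a ribbon concordance $D$ from $K_0$ to $K_1$, I must show $K_0$ is isotopic to $K_1$. Write $X_i = S^3 \setminus N(K_i)$ for the exteriors and $W = (S^3\times[0,1])\setminus N(C)$, so $X_0$ and $X_1$ sit in $\partial W$ at the two ends. The starting point is Gordon's analysis of $W$. A concordance annulus has Euler characteristic $0$, so a ribbon concordance (no maxima) has exactly as many bands as births, and the bands can only merge distinct circles; hence the bands organize the births into a tree, and each band relation eliminates the meridian created by one birth. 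Consequently the bottom inclusion $\pi_1(X_0)\hookrightarrow\pi_1(W)$ is an isomorphism, while Gordon's lemma also gives that the top inclusion $\pi_1(X_1)\to\pi_1(W)$ is surjective. Composing, $C$ produces an epimorphism $\phi\colon\pi_1(X_1)\twoheadrightarrow\pi_1(X_0)$, and symmetrically $D$ produces $\psi\colon\pi_1(X_0)\twoheadrightarrow\pi_1(X_1)$. Both are compatible with the peripheral structure: all meridians of the concordance annulus are isotopic in $W$, and the annulus induces the $0$-framing at each end, so a meridian and Seifert longitude of $K_1$ push into $W$ to a meridian and longitude of $K_0$; thus $\phi(\mu_1)=\mu_0$ and $\phi(\lambda_1)=\lambda_0$ up to conjugacy, and likewise for $\psi$.

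The heart of the matter is to promote $\phi$ and $\psi$ to isomorphisms, and this is where representations to $SO(N)$ enter. Fix $N\geq 3$. Precomposition with a surjection is injective on Hom-sets into any target group, so $\phi^*\colon \Hom(\pi_1(X_0),SO(N))\hookrightarrow \Hom(\pi_1(X_1),SO(N))$ is an injective morphism of (compact) real algebraic varieties, and likewise $\psi^*$ the other way; the composite $\psi^*\circ\phi^*$ is then an injective endomorphism of the variety $\Hom(\pi_1(X_1),SO(N))$. An injective endomorphism of a scheme of finite type over a field is surjective (Ax--Grothendieck; apply it after complexifying to $SO(N,\CC)$, where $\phi^*$ and $\psi^*$ remain injective for the same reason), so $\psi^*\circ\phi^*$ is bijective and hence $\phi^*$ is onto $\Hom(\pi_1(X_1),SO(N))$. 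Therefore every homomorphism $\pi_1(X_1)\to SO(N)$ factors through $\phi$, and in particular kills $\ker\phi$. Since every finite group embeds in some $SO(N)$ and knot groups are residually finite, letting $N$ range forces $\ker\phi=1$; so $\phi$, and by the same token $\psi$, is an isomorphism.

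It remains to pass from groups back to knots. Now $\phi$ is an isomorphism $\pi_1(X_1)\to\pi_1(X_0)$ of knot groups taking the meridian--longitude pair $(\mu_1,\lambda_1)$ to $(\mu_0,\lambda_0)$ up to conjugacy. Knot exteriors are irreducible and Haken, so by Waldhausen's theorem $\phi$ is induced by a homeomorphism $X_1\to X_0$ restricting on the boundary tori so as to match meridians with meridians and longitudes with longitudes. Filling in the meridional solid tori extends this to a homeomorphism of pairs $(S^3,K_1)\to(S^3,K_0)$ which, because the framings and orientations are matched, is orientation-preserving on $S^3$ and on the knot; hence $K_0$ is isotopic to $K_1$.

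I expect the representation-variety step to be the crux and the place where $SO(N)$ is really used: one could also finish that step purely group-theoretically, since $\psi\circ\phi$ is a surjective endomorphism of the Hopfian group $\pi_1(X_1)$, but the argument through $SO(N)$ is the robust one — it is precisely the finite type (indeed compactness) of $\Hom(\pi_1(X),SO(N))$, together with the richness of the family $\{SO(N)\}_{N}$, that makes the rigidity ``injective self-morphism $\Rightarrow$ bijective'' do the work. The remaining ingredients — Gordon's handle-theoretic description of $W$, the peripheral bookkeeping, and Waldhausen's rigidity — are standard once arranged in the right order; the only delicate point among them is confirming that $\phi$ respects longitudes as well as meridians, since that is what lets the last step yield an actual isotopy of knots rather than a bare homeomorphism of exteriors.
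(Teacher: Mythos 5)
Your reduction to antisymmetry, the peripheral bookkeeping, and the Waldhausen endgame are all fine, but the argument has a fatal gap at its very first substantive step: the claim that the bottom inclusion $\pi_1(X_0)\hookrightarrow\pi_1(W)$ is an isomorphism. Gordon's Lemma 3.1 gives only that this map is \emph{injective} (and even that is not formal --- it already requires the Gerstenhaber--Rothaus theorem on extending representations to compact groups), while the top map $\pi_1(X_1)\to\pi_1(W)$ is surjective. Surjectivity of the bottom map fails in general: the ``tree of bands'' heuristic does not let you Tietze-eliminate the birth meridians, because each band relation has the form $x_i = w\,y\,w^{-1}$ where the conjugator $w$ may itself involve the new generators. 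A concrete counterexample: puncture a ribbon disk for the square knot $K$ at a minimum to get a ribbon concordance from $K$ to the unknot $U$; then $\pi_1(X_0)\cong\ZZ$ while $\pi_1(W)\cong\pi_1(B^4\setminus D)$ is nonabelian. Consequently the epimorphisms $\phi\colon\pi_1(X_1)\twoheadrightarrow\pi_1(X_0)$ and $\psi$ on which everything afterwards rests are not established and do not exist in general for a single ribbon concordance. Your own closing remark is the tell: if $\phi$ and $\psi$ existed, then $\psi\circ\phi$ would be a surjective endomorphism of a Hopfian group and the theorem would follow in two lines, with no need for representation varieties; that this easy argument is not the proof is a signal that the epimorphisms are precisely what is hard.

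The actual content of the theorem is to show that, for the \emph{concatenated} self-concordance $C=C_0C_1$ from $K_0$ to $K_0$ with exterior $Y$, the already-surjective top map $\pi_1(X')\to\pi_1(Y)$ is also injective. The paper does this by exhibiting $R_N(Y)$ as an honest \emph{algebraic subset} of $R_N(X')$ (using the surjection on $\pi_1$ to rig compatible presentations) and, separately, producing a \emph{surjective polynomial map} $R_N(Y)\to R_N(X)\cong R_N(X')$ by extending every $SO(N)$-representation of $\pi_1(X)$ across the ribbon homology cobordism via Gerstenhaber--Rothaus; a dimension lemma for real algebraic sets then forces $R_N(Y)=R_N(X')$, and residual finiteness finishes. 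Note that this extension step genuinely uses compactness of $SO(N)$, so your proposed passage to $SO(N,\CC)$ to invoke Ax--Grothendieck is not available as a substitute; nor is Ax--Grothendieck the right shape of statement here (the paper needs ``algebraic subset admitting a surjection onto the ambient set is the whole set,'' not ``injective endomorphism is bijective''). The Ax--Grothendieck idea is attractive, but it is being applied to maps that you have not constructed.
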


This follows pretty immediately from the following (compare \cite[Theorem 1.4]{Gordon81}):
\begin{theorem}\label{selfconcordance}
Let $C$ be a ribbon concordance from $K$ to $K$. Then the exterior of $C$ is a relative $s$-cobordism from the exterior of $K$ to itself. 
\end{theorem}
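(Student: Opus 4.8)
The plan is to verify directly that each inclusion of $X_K$ into $X_C$ — where $X_C\subset S^3\times[0,1]$ is the exterior of $C$, a cobordism from $X_K$ to $X_K$ whose remaining boundary is the vertical torus $\partial X_K\times[0,1]$ — is a simple homotopy equivalence rel this vertical boundary.

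I would first record the handle structure that the height function $f$ on $C$ imposes. Since $f$ has critical points only of index $0$ and $1$, and $\chi(C)=0$ makes the number $b$ of index-$0$ points equal to the number of index-$1$ points, $X_C$ is built from $X_K\times[0,1]$ by attaching $b$ one-handles (from the births) and $b$ two-handles (from the bands) and nothing else; turned upside down, it is built from $X_K\times[0,1]$ (now at the top) by $b$ two-handles and $b$ three-handles. Hence the inclusion of the bottom $X_K$ is $\pi_1$-injective with a retraction $\pi_1 X_C\to\pi_1 X_K$ (Gordon, \cite{Gordon81}), while the inclusion of the top $X_K$ is $\pi_1$-surjective (two-handles impose relations, three-handles do nothing). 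Composing the top surjection with the retraction gives a surjective endomorphism of the knot group $G=\pi_1 X_K$; since $G$ is finitely generated and residually finite, hence Hopfian, this endomorphism is an isomorphism, which forces both inclusions of $X_K$ to be $\pi_1$-isomorphisms and $\pi_1 X_C\cong G$. Exactly the same applies to every iterated stack $X_C^{(n)}=X_C\cup_{X_K}\cdots\cup_{X_K}X_C$, again the exterior of a ribbon self-concordance of $K$: all inclusions of $X_K$ into $X_C^{(n)}$ are $\pi_1$-isomorphisms and $\pi_1 X_C^{(n)}\cong G$.

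It remains to show that the bottom inclusion $X_K\hookrightarrow X_C$ is a homotopy equivalence; as $X_K$ is aspherical and this map is a $\pi_1$-isomorphism, this is equivalent to $\pi_2 X_C=0$, and here the representation varieties enter. Each $X_C^{(n)}$ is homotopy equivalent to a $2$-complex (the knot exterior $X_K$, of homotopy dimension $\le 2$, with $1$- and $2$-handles attached), has $\pi_1=G$, and has $\chi(X_C^{(n)})=0$. Fix $\rho\colon G\to SO(N)$ and let $V_\rho$ be the associated local system. Because $H^0$ and $H^1$ of a space depend only on $\pi_1$, because $H^{\ge 3}$ of a $2$-complex vanishes, and because the twisted Euler characteristic equals $\dim(V_\rho)\cdot\chi(X_C^{(n)})=0$, the universal-coefficient spectral sequence $E_2^{p,q}=\mathrm{Ext}^p_{\ZZ G}\!\big(H_q(\widetilde{X_C^{(n)}}),V_\rho\big)\Rightarrow H^{p+q}(X_C^{(n)};V_\rho)$ gives a bound $\dim\mathrm{Hom}_G(\pi_2 X_C^{(n)},V_\rho)\le c(G,\rho)$ in which $c(G,\rho)$ depends only on the finite-dimensional groups $H^\ast(G;V_\rho)$, hence is independent of $n$. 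On the other hand, each gluing in $X_C^{(n)}$ is along the aspherical $X_K$, whose preimage in the universal cover is contractible, so a Mayer–Vietoris argument gives $\pi_2 X_C^{(n)}\cong(\pi_2 X_C)^{\oplus n}$ as $\ZZ G$-modules. Combining, $n\cdot\dim\mathrm{Hom}_G(\pi_2 X_C,V_\rho)\le c(G,\rho)$ for all $n$, so $\mathrm{Hom}_G(\pi_2 X_C,V_\rho)=0$ for every representation into every $SO(N)$. Finally $\pi_2 X_C$ is a finitely generated $\ZZ G$-module embedded in a free one, hence $\ZZ$-torsion-free; a nonzero such module has, using residual finiteness of $G$, a nonzero finite-dimensional quotient over $\QQ$, which can be realized unitarily and, after adding its complex conjugate, inside some $SO(N)$ — contradicting the vanishing just shown. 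Therefore $\pi_2 X_C=0$.

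It follows that both inclusions $X_K\hookrightarrow X_C$, being $\pi_1$-isomorphisms between aspherical spaces, are homotopy equivalences, so $X_C$ is a relative $h$-cobordism. For the upgrade to a relative $s$-cobordism one only needs the Whitehead torsion to vanish, and this is automatic: knot exteriors are Haken, so $\mathrm{Wh}(\pi_1 X_K)=0$ by Waldhausen's theorem. The main obstacle is the asphericity step — excluding a nontrivial $\pi_2 X_C$ — since the handle decomposition, the fundamental group, and the integral homology of $X_C$ do not suffice to force it; this is precisely where the $SO(N)$ representation varieties and the stacking trick are needed.
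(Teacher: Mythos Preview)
Your argument for the $\pi_1$-isomorphism has a genuine gap, and it sits exactly at the crux of the theorem. You assert that the bottom inclusion $i_0\colon \pi_1 X_K \to \pi_1 X_C$ admits a retraction $r\colon \pi_1 X_C \to \pi_1 X_K$, citing Gordon. But \cite[Lemma~3.1]{Gordon81} proves only that $i_0$ is \emph{injective}; the proof there uses Gerstenhaber--Rothaus to extend each representation of $\pi_1 X_K$ into a compact Lie group across the added $1$- and $2$-handles, which is far weaker than producing a splitting inside $\pi_1 X_K$ itself. Constructing such a retraction amounts to solving, over the knot group, the system of equations imposed by the $2$-handle relators on the new generators --- a Kervaire--Laudenbach--type problem that is not known in this generality. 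Without the retraction your Hopfian step collapses: knowing only that $G$ injects into $\pi_1 X_C$ (via $i_0$) and surjects onto it (via $i_1$) does \emph{not} force $\pi_1 X_C\cong G$, even for Hopfian $G$. For a model of the failure take $G=F_2$ and $H=\mathrm{PSL}_2(\ZZ)$: $F_2$ is Hopfian, it surjects onto the $2$-generated group $H$, and it embeds in $H$ as a finite-index subgroup, yet $H\not\cong F_2$.

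This is precisely the obstacle the paper's proof is designed to overcome. Rather than trying to split $i_0$, the paper shows for each $N$ that $R_N(Y)$ sits inside $R_N(X')$ as a real algebraic subset (from surjectivity of $i_1$) while simultaneously admitting a surjective polynomial map onto $R_N(X)\cong R_N(X')$ (from Gerstenhaber--Rothaus, as in \cite{DLVW}). The real algebraic geometry Lemma~\ref{algebraic} then forces $R_N(Y)=R_N(X')$, so every $SO(N)$-representation of $\pi_1 X'$ extends over $\pi_1 Y$; combined with residual finiteness this yields injectivity of $i_1$. Your subsequent $\pi_2$ argument and the upgrade to an $s$-cobordism both presuppose that the $\pi_1$-isomorphism is already in hand (you invoke it for every stacked $X_C^{(n)}$), so they are moot until this gap is closed; once it is, the paper simply cites Gordon's original argument from \cite[Theorem~1.4]{Gordon81} for the $s$-cobordism conclusion.
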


In the conclusion we point out that this theorem potentially generalizes to homology ribbon cobordism in the sense of \cite{DLVW} and we consider the possibility of answering some other questions from \cite[Section 6]{Gordon81}.

{\bf Acknowledgement:} We thank Josh Greene for introducing us to this problem and Bernd Sturmfels and Rainer Sinn for suggestions regarding real algebraic geometry and Nathan Dunfield for comments and references. 

\section{Proof of the main theorems}

\begin{proof}[Proof of Theorem \ref{selfconcordance}]
For $N>0$, let $R_N(\pi)$ be the representation variety of the group $\pi$ to $SO(N)$. This is a real algebraic set ( the zero-set of polynomials in $\mathbb{R}^k$ for some $k$) for $\pi$ finitely generated, with coordinates given by coordinates of the matrices of the generators. Define $R_N(X)=R_N(\pi_1(X))$ for a connected manifold $X$.

We have a ribbon concordance $C\subset S^3\times [0,1]$ from $K\subset S^3\times \{0\}$ to $K\subset S^3\times \{1\}$. Let $X$ and $X'$ denote the exterior of $K$ in $S^3\times\{0\}$ and $S^3\times\{1\}$ respectively, and let $Y$ denote the exterior of $C$ in $S^3\times[0,1]$. 
By \cite[Lemma 3.1]{Gordon81}, $\iota :\pi_1(X')\to \pi_1(Y)$ is surjective (where the map is induced by inclusion), hence the induced map $R_N(Y)\to R_N(X')$ is injective. For our argument, we need to know something slightly stronger, that $R_N(Y)\subseteq R_N(X')$ is an algebraic subset. The point is here that since $\iota:\pi_1(X')\to \pi_1(Y)$ is surjective, we may take a presentation $\pi_1(X')\cong \langle g_1,\ldots, g_k | r_1,\ldots , r_{k-1} \rangle$, and use the surjection to get a presentation $\pi_1(Y)\cong \langle g_1,\ldots, g_k | r_1, \ldots, r_m \rangle$, where $r_k,\ldots , r_m$ are the extra relations that hold in $\pi_1(Y)$. Then we see that $R_N(Y)$ is an algebraic subset of $R_N(X')$, with coordinates given by the matrix coordinates of the matrices in $SO(N)$ corresponding to $g_1,\ldots, g_k$, together with relations corresponding to the relations defining $SO(N)$ for each matrix and the relators being the identity in $r_1, \ldots r_k$ or $r_1,\ldots, r_m$ respectively. 

Also by \cite[Lemma 3.1]{Gordon81} the map $\pi_1(X)\to \pi_1(Y)$ is injective. 
By \cite[Proposition 2.1]{DLVW} the induced map $R_N(Y)\to R_N(X)$ is surjective. Both of these results follow from a result of Gerstenhaber-Rothaus  \cite[Theorem 1(ii)]{GerstenhaberRothaus} which allows one to extend a representation $\rho: \pi_1(X)\to SO(N)$ to a representation $\rho': \pi_1(Y)\to SO(N)$ which restricts to $\rho$ using the fact that $Y$ has a handle decomposition with $k$ 1-handles and $k$ 2-handles added to a collar neighborhood of $X$, and so that the $2$-handles homologically cancel the $1$-handles to obtain a homology cobordism (this is called a ribbon homology cobordism in \cite{DLVW}). Note that the map $R_N(Y)\to R_N(X)$ may be with respect to different coordinates, since the generators of $\pi_1(X)$ may be regarded as a subset of the generators of $\pi_1(Y)$, and hence this polynomial map is a projection onto the subspace corresponding to the generators of $\pi_1(X)$. There is a polynomial isomorphism from $R_N(X)$ to $R_N(X')$ given by a sequence of Tietze transformations. Hence we get a surjective polynomial map $R_N(Y)\to R_N(X')$ by composing the projection $R_N(Y)\to R_N(X)$ with the polynomial isomorphism $R_N(X)\to R_N(X')$. 

We want to show that $\iota: \pi_1(X')\to \pi_1(Y)$ is injective and hence an isomorphism.  

Let $c\in \pi_1(X')-\{1\}$, and choose $N$ so that there is a representation $\rho:\pi_1(X')\to SO(N)$ such that $\rho(c)\neq 1$ (using the fact that $\pi_1(X')$ is residually finite, see \cite{Hempel87}, and that any finite group $G$ embeds into $SO(N)$ for some $N$ ). Then $R_N(Y)\subseteq R_N(X')$ is a real algebraic subset with $R_N(Y)\twoheadrightarrow R_N(X')$ a surjective polynomial map by the above discussion. 
 Then by Lemma \ref{algebraic} $R_N(Y)=R_N(X')$. 
Thus $\rho$ is the image of a representation $\rho': \pi_1(Y) \to SO(N)$. Hence $\rho'(\iota(c))\neq 1$, and therefore $\iota(c)$ is non-trivial in $\pi_1(Y)$. Thus $\iota: \pi_1(X')\to \pi_1(Y)$ is injective, and hence an isomorphism. 

The argument finishes the same as the 4th paragraph of the argument of \cite[Lemma 3.2]{Gordon81} and the proof of \cite[Theorem 1.4]{Gordon81}. We briefly summarize the fundamental group result: the map $\pi_1 (X)\to \pi_1(Y) \cong \pi_1(X')$ induced by the embedding $X\subset Y$ is injective preserving the peripheral structure, and thus induces a cover $X\to X'$ by a theorem of Waldhausen (note that there are knots such as the torus knots whose complements self-cover, but not isomorphic on the peripheral subgroup). Because the peripheral subgroup map is injective, this cover is index 1 and hence $\pi_1(X)\to \pi_1(Y) \cong \pi_1(X')$ is an isomorphism preserving the peripheral structure (meridian and longitude are sent to meridian and longitude). This implies that the knots are isotopic. The proof that $Y$ is an s-cobordism is the same as the proof of \cite[Theorem 1.4]{Gordon81} (this is not necessary for the proof of Theorem \ref{order}). 
\end{proof}

\begin{center}
\begin{figure}\includegraphics[height = 2 in]{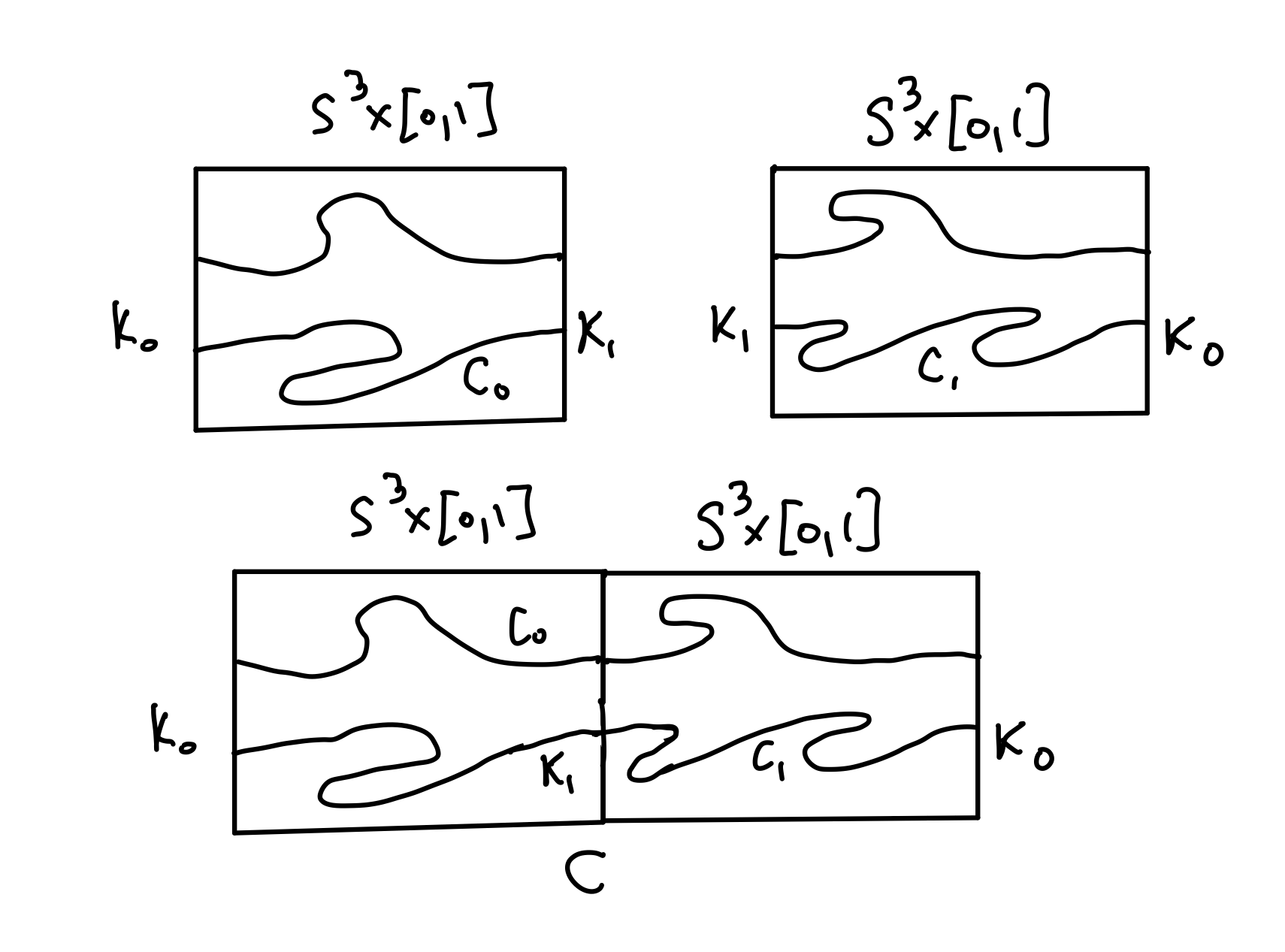}
\caption{Composing ribbon concordances to get a self-concordance} \label{concordance}

\end{figure}
\end{center}

\begin{proof}[Proof of Theorem \ref{order}]
As observed before, we only need to show that the relation is antisymmetric. 

Now, let $K_1\geq K_0$ by a ribbon concordance $C_0$ and $K_0\geq K_1$ by a ribbon concordance $C_1$.  Concatenating the ribbon concordances, we get a ribbon concordance $C=C_0C_1$ from $K_0$ to $K_0$ (see Figure \ref{concordance}). Let $Y$ be the exterior of $C$, $Y_i$ the exterior of $C_i$, $X_i$ the exterior of $K_i$. By Theorem \ref{selfconcordance}, $   \pi_1(X_0)\to \pi_1(Y)$ is an isomorphism (induced by the embedding on the right end of the concordance). Hence the map $\pi_1(X_0)\to \pi_1(Y_1)$ is also an isomorphism. Now again the argument finishes the same as the 4th paragraph of the argument of \cite[Lemma 3.2]{Gordon81} to show that $\pi_1(X_0) \cong \pi_1(X_1)$ preserving the peripheral subgroups, and hence $K_0$ and $K_1$ are isotopic. 
\end{proof}

\section{Conclusion}
We make some remarks on this argument and generalizations and the prospect for addressing some Questions from \cite[Section 6]{Gordon81}. 

The first version of the proof of Theorem \ref{order} used the fact that each knot group embeds into $SO(N)$ for some $N$ \cite{Agol18, PrzytyckiWise12}. However, we realized that this result is overkill and that we only needed residual finiteness. Theorem \ref{order} might generalize to the setting of $\mathbb{Q}$-homology ribbon cobordisms to prove \cite[Conjecture 1.1]{DLVW}, generalizing \cite[Conjecture 1.1]{Gordon81}. The proof that a self-homology ribbon cobordism has isomorphic fundamental group ought to carry over, but we do not know how to show that it is an s-cobordism. The same proof applies to knots which are {\it strongly homotopy ribbon concordant} in the sense of \cite{MillerZemke}. 

One natural question arising from the proof of Theorem \ref{order} is whether one may extract an invariant from $R_N(S^3-K)$ which preserves the partial order? A natural invariant is the ordered list of dimensions of the irreducible components of $R_N(S^3-K)$, considered up to lexicographic ordering. Then this ordering is compatible with the partial ordering of ribbon concordance of knots and thus might give an obstruction to ribbon concordance (but only in one direction for each $N$ since lexicographic order is a total order). For example, if the lexicographic ordering is reversed for two different $N$, then $K_0$ and $K_1$ could not be related by ribbon concordance in either order. 

One could hope to apply the proof of Theorem \ref{order} to answer \cite[Question 6.2]{Gordon81}. Given a sequence of knots $K_1 \geq K_2 \geq K_3 \geq \cdots$, does there exist $n$ so that $K_m=K_n$ for all $m\geq n$? The proof of Theorem \ref{order} shows that the dimensions of the representation varieties must stabilize in some sense. But to prove injectivity one would need to know that there is a faithful representation independent of $N$ which is not known in general. One special case that might work is if all the $K_i$ are hyperbolic. Assuming \cite[Conjecture 1.9]{CRS}, one would know that each knot has a faithful $SO(3)$ representation (lying on the main component of the $PSL_2(\mathbb{C})$ character variety containing the discrete faithful representation). Then the proof of Theorem \ref{order} considering $SO(3)$ representations would show that Question 6.2 holds for such a sequence. Also assuming \cite[Conjecture 1.9]{CRS}, one might be able to show that main component of the $A$-polynomial of $K_{i+1}$ divides the $A$-polynomial of $K_i$ \cite{CCGLS}, and potentially get some information about \cite[Question 6.4]{Gordon81}.

\appendix

\section{A lemma in real algebraic geometry}

Here we describe the real algebraic geometry needed to obtain the main result. We will be working in the classical setting with real algebraic sets; although we reference \cite{BCR}, we do not need to work at the level of generality of that book (only a few  definitions and results from Chapter 2). 


The following result is classical for algebraically closed fields, but we did not find the explicit statement for algebraic sets over the real numbers. 

\begin{lemma}\label{dimension}
Let $X$ be an irreducible real algebraic set, and $Y\subseteq X$ an algebraic subset. Then $dim(Y)=dim(X)$ iff $Y=X$.
\end{lemma}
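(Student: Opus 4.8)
The plan is to reduce the statement to the well-known fact that a proper Zariski-closed subset of an irreducible variety has strictly smaller dimension, which is classical over algebraically closed fields, and to transfer it to the real setting by passing to complexifications. First I would recall the relevant notion of dimension from \cite{BCR}: for a real algebraic set $X\subseteq\RR^n$, $\dim(X)$ is the Krull dimension of the coordinate ring $\RR[X]$ (equivalently the dimension of the complexification, or the transcendence degree of the function field of an irreducible component). The key point is that an irreducible real algebraic set need not have an irreducible complexification, so one has to be a little careful; the cleanest route is to work directly with the coordinate rings. If $Y\subsetneq X$ with $X$ irreducible, then the ideal $I(Y)$ strictly contains $I(X)$, and since $\RR[X]=\RR[x_1,\dots,x_n]/I(X)$ is a domain, $I(Y)/I(X)$ is a nonzero ideal in this domain.

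The main step is then a commutative-algebra statement: if $A$ is a finitely generated domain over a field $k$ and $\mathfrak{a}\subseteq A$ is a nonzero ideal, then $\dim(A/\mathfrak{a})<\dim(A)$. I would prove this by choosing a nonzero $f\in\mathfrak{a}$, so that $\dim(A/\mathfrak{a})\le\dim(A/(f))$, and invoking Krull's principal ideal theorem together with the fact that for finitely generated algebras over a field the dimension equals the transcendence degree of the fraction field (so there is no pathology with non-catenary rings): cutting by one nonzerodivisor $f$ drops the transcendence degree by exactly one. Applied with $A=\RR[X]$, this gives $\dim(Y)\le\dim(X)-1<\dim(X)$. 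The converse direction, that $Y=X$ implies $\dim(Y)=\dim(X)$, is immediate.

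Alternatively, and perhaps more in the spirit of citing \cite{BCR} directly, I would complexify: let $X_\CC$ and $Y_\CC$ be the complex Zariski closures of $X$ and $Y$ in $\CC^n$. One has $\dim(X)=\dim(X_\CC)$ and $\dim(Y)=\dim(Y_\CC)$ by the standard comparison of real and complex dimension for real algebraic sets, and $Y_\CC\subseteq X_\CC$. Decompose $X_\CC=\bigcup_i Z_i$ into complex-irreducible components; since $X$ is Zariski-dense in $X_\CC$ and real-irreducible, all the $Z_i$ have the same dimension $\dim(X)$ and are permuted transitively by complex conjugation. If $\dim(Y)=\dim(X)$, then $Y_\CC$ contains some component $Z_{i_0}$ of $X_\CC$ of full dimension; applying conjugation, $Y_\CC$ (which is defined over $\RR$) contains all the conjugates of $Z_{i_0}$, hence all of $X_\CC$, so $Y_\CC=X_\CC$ and therefore $Y=X_\CC\cap\RR^n\supseteq X$, giving $Y=X$.

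The main obstacle I anticipate is the bookkeeping around irreducibility: an irreducible real algebraic set can have a reducible complexification, so one cannot simply quote "a proper closed subvariety of an irreducible variety has smaller dimension" verbatim. The fix is exactly the transitivity-of-conjugation observation above (or, in the algebraic language, the fact that $\RR[X]$ is a domain even though $\CC[X_\CC]$ may not be), and once that is in place the dimension drop is routine. I would also take care to use the version of dimension from \cite{BCR} consistently, since there are several a priori different definitions (Krull dimension, topological dimension, dimension of the complexification) that agree for real algebraic sets but whose equivalence is itself a small lemma in that reference.
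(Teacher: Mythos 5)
Your proposal is correct, and in fact you give two valid arguments. Your first route is essentially the paper's: both reduce the lemma to the observation that, since $X$ is irreducible, $\mathcal{P}(X)=\mathbb{R}[x_1,\dots,x_n]/\mathcal{I}(X)$ is a domain in which $\mathcal{I}(Y)/\mathcal{I}(X)$ is a nonzero ideal whenever $Y\subsetneq X$. Where you then invoke Krull's principal ideal theorem together with the dimension formula for finitely generated domains over a field (to get $\dim(A/(f))=\dim(A)-1$), the paper closes more elementarily: it pulls back a maximal chain of primes of $\mathcal{P}(Y)$ along the surjection $\mathcal{P}(X)\to\mathcal{P}(Y)$, notes that every pulled-back prime contains the nonzero kernel $\mathcal{I}(Y)/\mathcal{I}(X)$, and prepends the zero ideal to obtain a strictly longer chain, giving $\dim(X)\geq\dim(Y)+1$ directly from the definition of Krull dimension. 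Your version buys a slightly sharper statement (every component of a hypersurface section has dimension exactly $\dim(X)-1$) at the cost of quoting the Hauptidealsatz and catenarity; the paper's needs nothing beyond the definition. Your second route, via complexification, is genuinely different and is the one place where real algebraic sets require care; you correctly identify the pitfall (an irreducible real algebraic set can have reducible complexification) and the fix (the complex components are equidimensional and permuted transitively by conjugation, and $Y_{\mathbb{C}}$ is conjugation-stable), so that argument also goes through. Either route would serve the paper's purposes.
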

\begin{proof}
We recall some notation from \cite[Chapter 2]{BCR}. 
For an algebraic set $A\subset \mathbb{R}^n$, let $\mathcal{I}(A)$ denote the polynomials in $\mathbb{R}[x_1,\ldots,x_n]$ vanishing on $A$. Let $\mathcal{P}(A)=\mathbb{R}[x_1,\ldots,x_n]/\mathcal{I}(A)$ denote the polynomial ring of $A$. 

By \cite[Definition 2.8.1]{BCR}, the dimension of $X$ is the Krull dimension of the polynomial ring $\mathcal{P}(A)$, that is, the maximal length of a chain of prime ideals of $\mathcal{P}(X)$. We may assume that $Y$ is irreducible (equivalently $\mathcal{I}(Y)$ is a prime ideal), since the dimension of an algebraic set is the maximum of its irreducible components  \cite[Proposition 2.8.5(i)]{BCR} - if $Y$ is reducible, replace it with an irreducible component of the same dimension. 
The restriction map induced by the inclusion $Y\subseteq X$ induces a map $\mathcal{P}(X) \to \mathcal{P}(Y)$ with kernel $\mathcal{I}(Y)/\mathcal{I}(X)$. Let $0=P_0\subset P_1 \subset P_2 \subset \cdots \subset P_{k-1}\subset P_k \subset \mathcal{P}(Y)$ be a chain of prime ideals (each inclusion is proper) of $\mathcal{P}(Y)$ of length $k=dim(Y)$ ($0$ is a prime ideal in $\mathcal{P}(Y)$ since $\mathcal{I}(Y)$ is a prime ideal). The preimages in $\mathcal{P}(X)$ will give a chain of prime ideals in $\mathcal{P}(X)$ which all contain $\mathcal{I}(Y)/\mathcal{I}(X)$ (the preimage of a prime ideal is prime). Then we can extend the chain by 1 (the $0$-ideal of $\mathcal{P}(X)$) to get a chain of length $k+1$ if $Y \neq X$. Thus $dim(X)\geq dim(Y)+1$.  
\end{proof}

\begin{lemma} \label{algebraic}
Let $X$ and $Y$ be real algebraic sets, with $Y\subseteq X$ and a surjective polynomial map $\varphi: Y\twoheadrightarrow X$. 
Then $Y=X$.
\end{lemma}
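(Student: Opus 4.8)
The plan is to derive a contradiction from the assumption that $Y\subsetneq X$ by showing that a strict containment is incompatible with the existence of a surjective polynomial map $\varphi\colon Y\twoheadrightarrow X$. The basic intuition is that $\varphi$ forces $\dim(Y)\geq\dim(X)$, while $Y\subseteq X$ forces $\dim(Y)\leq\dim(X)$, so $\dim(Y)=\dim(X)$, and then Lemma \ref{dimension} finishes the job --- provided we work on an irreducible piece where that lemma applies.

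First I would reduce to the irreducible case. Since $Y\subseteq X$, every irreducible component of $X$ contains the image under $\varphi$ of\ldots no: more carefully, decompose $X=X_1\cup\cdots\cup X_r$ into irreducible components and $Y=Y_1\cup\cdots\cup Y_s$ likewise. Because $\varphi$ is surjective, $X=\bigcup_j\varphi(Y_j)$, and since each $\varphi(Y_j)$ is contained in its Zariski closure $\overline{\varphi(Y_j)}$, which is irreducible (the closure of the image of an irreducible set under a polynomial map is irreducible), we get that $X$ is covered by finitely many irreducible algebraic sets; hence each component $X_i$ is contained in some $\overline{\varphi(Y_j)}$, and in particular $\dim X_i\leq\dim\overline{\varphi(Y_j)}\leq\dim Y_j$. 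Pick $i$ realizing $\dim X_i=\dim X$ (using \cite[Proposition 2.8.5(i)]{BCR}), and the corresponding $Y_j$. Then $\dim Y_j\geq\dim X$. On the other hand $Y_j\subseteq Y\subseteq X$, so $\dim Y_j\leq\dim X$, giving $\dim Y_j=\dim X=\dim X_i$.

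Next I would locate everything inside a single irreducible ambient set. We have $Y_j\subseteq X$ and $Y_j$ irreducible, so $Y_j$ is contained in some irreducible component $X_{i'}$ of $X$; then $\dim X_{i'}\geq\dim Y_j=\dim X\geq\dim X_{i'}$, so $\dim X_{i'}=\dim Y_j$, and now $Y_j\subseteq X_{i'}$ with $X_{i'}$ irreducible and $\dim Y_j=\dim X_{i'}$. By Lemma \ref{dimension} applied with $X_{i'}$ in the role of $X$ and $Y_j$ in the role of $Y$, we conclude $Y_j=X_{i'}$. But wait --- this only recovers one component of $X$, not all of it, so I still need to argue $Y=X$ rather than merely that some component of $Y$ equals some component of $X$.

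To close this gap I would set up an induction on $\dim X$, or better, run the component-covering argument uniformly. Here is the clean version: I claim every irreducible component $X_i$ of $X$ equals some $Y_j$. Indeed, fix $X_i$. Its preimage $\varphi^{-1}(X_i)$ is an algebraic subset of $Y$, hence a union of some of the $Y_j$ together with lower-dimensional pieces of others; since $\varphi(\varphi^{-1}(X_i))=X_i$ by surjectivity, at least one irreducible component $Y_j\subseteq\varphi^{-1}(X_i)$ has $\overline{\varphi(Y_j)}=X_i$, so $\dim Y_j\geq\dim X_i$; and $Y_j\subseteq X$ with $Y_j$ irreducible puts $Y_j$ inside a single component $X_{i''}$, whence $\dim X_i\leq\dim Y_j\leq\dim X_{i''}$. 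If $i''\neq i$ this already shows $\dim X_i\le\dim X_{i''}$; iterating, since there are finitely many components and dimensions are bounded, I can choose things so that eventually $Y_j$ lies in a component $X_{i''}$ with $\dim X_{i''}=\dim Y_j$, forcing $Y_j=X_{i''}$ by Lemma \ref{dimension}, and then $X_{i''}=Y_j\subseteq\varphi^{-1}(X_i)$ gives $X_{i''}=\varphi(Y_j)\subseteq X_i$, so $X_{i''}\subseteq X_i$, and since both are components of $X$, $X_{i''}=X_i$; thus $Y_j=X_i$. Since every component of $X$ is thereby realized as a component of $Y$ contained in $X$, and conversely every component of $Y$ is contained in $X$, we get $Y=\bigcup_j Y_j\subseteq X=\bigcup_i X_i\subseteq\bigcup_j Y_j=Y$, hence $Y=X$. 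The main obstacle I anticipate is exactly this bookkeeping with components --- making sure the dimension inequalities chain together correctly without circular reasoning, and handling the possibility that $\varphi$ mixes components of $Y$ across components of $X$; the irreducible-case Lemma \ref{dimension} does all the real work, and the rest is a finiteness argument on the poset of components ordered by dimension.
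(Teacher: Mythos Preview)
Your overall strategy---decompose into irreducible components, compare dimensions through $\varphi$, and invoke Lemma~\ref{dimension}---is exactly the paper's approach. But the component bookkeeping has a genuine gap.

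The problem is the sentence ``then $X_{i''}=Y_j\subseteq\varphi^{-1}(X_i)$ gives $X_{i''}=\varphi(Y_j)\subseteq X_i$.'' Two things go wrong here. First, $Y_j=X_{i''}$ does not say $\varphi(Y_j)=X_{i''}$; it says $\varphi(X_{i''})\subseteq X_i$, which is a statement about the \emph{image} of $X_{i''}$, not about $X_{i''}$ itself. Second, after your iteration the set $Y_j$ you finally pin down lies in $\varphi^{-1}(X_{i_{k-1}})$ for some later index $i_{k-1}$, not in $\varphi^{-1}(X_i)$ for your original $i$. So what your iteration actually proves is: starting from $X_i$, you reach a cycle of components of some dimension $d\geq\dim X_i$, and the components on that cycle lie in $Y$. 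It does \emph{not} prove that $X_i$ itself lies in $Y$ when $\dim X_i<d$. Concretely, if $\varphi$ permutes the top-dimensional components among themselves while doing something more complicated on lower-dimensional ones, your argument never returns to those lower strata.

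The paper closes this gap with two extra ingredients. First, it works with $X_i'=X_i\setminus\bigcup_{j\neq i}X_j$ and chooses $U_i$ to be an irreducible component of $\varphi^{-1}(X_i)$ meeting $\varphi^{-1}(X_i')$; this guarantees the $U_i$ are pairwise distinct (a point of $U_i\cap\varphi^{-1}(X_i')$ cannot lie in any $U_j$ with $j\neq i$), which is the injectivity you need for a pigeonhole argument. Second, it runs a \emph{reverse induction on dimension}: the top-dimensional $X_i$ are matched bijectively with top-dimensional $X_j$ (so those lie in $Y$); then for a component $X_i$ of the next dimension down, one argues its $U_i$ cannot land in a higher-dimensional component (those are already used up by the bijection), forcing $U_i$ into a component of the same dimension, and so on. Your vague ``iterating, since there are finitely many components and dimensions are bounded'' is gesturing at this, but the induction has to go \emph{down} through dimensions with a distinctness/pigeonhole step at each level, not \emph{up} from a fixed $X_i$.
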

\begin{proof}

{\bf Remark: } One needs to be careful that $Y$ is an algebraic subset of $X$, not just the image of an injective algebraic map $Y \to X$ which may only have semi-algebraic image. An example of this sort of phenomenon (suggested to us by Nathan Dunfield) is the algebraic set $Y= \{xy=1\} \subset \mathbb{R}^2$. Projection to $X=\mathbb{R}$ gives the semi-algebraic set $\{x\neq 0\} \subset \mathbb{R}$. But this also has a surjective polynomial map to $\mathbb{R}$ by the polynomial map $(x,y) \mapsto x-y$. 
The argument below fails here because $\{x\neq 0\}$ is Zariski dense in $\mathbb{R}$, which cannot occur for a proper algebraic subset of an irreducible variety by the previous lemma. 

Let $X=X_1\cup \cdots \cup X_p$, where $X_i$  are the irreducible components of $X$ (\cite[Theorem 2.8.3(i)]{BCR}). 

For $i=1, \dots, p$, $\varphi^{-1}(X_i)$ is a Zariski closed subset of $Y$. 
Let $X_i'=X_i-\cup_{j\neq i} X_j$, then $clos_{Zar}(X_i')=X_i$  by the uniqueness of the irreducible decomposition.
Thus $dim(X_i)=dim(X_i') \leq dim(\varphi^{-1}(X_i'))$ by \cite[Proposition 2.8.2]{BCR} and  \cite[Theorem 2.8.8]{BCR}. Thus there is an irreducible component $U_i$ of $\varphi^{-1}(X_i)$ with $dim(U_i) = dim(\varphi^{-1}(X_i')) \geq dim(X_i)$ and $U_i\cap \varphi^{-1}(X_i') \neq \emptyset$. Hence we have $p$ irreducible algebraic subsets $U_1,\ldots, U_p\subset Y$ with $dim(U_i) \geq dim(X_i)$ and $U_i-\cup_{j\neq i} U_j \neq \emptyset$. 

But $U_i\subseteq X_j$ for some $j$. If $dim(X_i)=dim(X)$, then $dim(X_i)\leq dim(U_i)\leq dim(X_j)\leq dim(X)$, so $dim(U_i)=dim(X_j)$ and by Lemma \ref{dimension}, $U_i=X_j$. Thus $\varphi^{-1}(X_i) \supseteq X_j$, and the $X_j$'s with dimension $dim(X)$ are a permutation of the $X_i$'s with $dim(X_i)=dim(X)$. 

Suppose all irreducible components $X_i$ of $X$ with dimension $\geq k>0$ have $U_i=X_j$ for some $j$ with $dim(X_j)=dim(X_i)$, giving a bijection between the components of dimension $\geq k$. Let $dim(X_i)=k-1$. Then $U_i\subseteq X_j$ where $dim(X_j)=k-1$, since $X_j$ cannot have dimension $\geq k$ or else it would be in the preimage of some $X_m$ with $dim(X_m)\geq k$, contradicting that $\varphi(U_i) \cap X_i' \neq \emptyset$, where $X_i'$ is disjoint from $X_m$. Since $k-1=dim(X_i) \leq dim(U_i) \leq dim(X_j)=k-1$, we have $U_i=X_j$ by Lemma \ref{dimension}. Thus by reverse induction on dimension, we see that the $U_i$ are a permutation of the $X_j$, and hence $Y=X$.  
\end{proof}

\bibliography{Gordon_conjecture.bbl}
\end{document}